\pgfplotsset{compat=1.14}
\DeclareMathOperator\sP{P}   
\newtheorem{theorem}{Theorem}[section]
\newtheorem{example}[theorem]{Example}
\newtheorem{lemma}[theorem]{Lemma}
\newtheorem{remark}[theorem]{Remark}
\newtheorem{proposition}[theorem]{Proposition}
\newcommand{\address}{Address: Facultad de Ciencias F\'isico-Matem\'aticas, Universidad Aut\'onoma de Sinaloa, Culiac\'an, Sinaloa 80010, M\'exico; E-mail: islas.jose@uas.edu.mx}
\title{A sharp bound for winning within a proportion of the maximum of a sequence}
\author{Jos\'e A. Islas\footnote{\address}}
\begin{document}

\maketitle

\begin{abstract}
This note considers a variation of the full-information secretary problem where the random variables to be observed are independent and  identically distributed. Consider $X_1,\dots,X_n$ to be an independent sequence of random variables, let $M_n:=\max\{X_1,\dots,X_n\}$, and the objective is to select the maximum of the sequence. What is the maximum probability of ``stopping at the maximum''? That is, what is the stopping time $\tau$ adapted to $X_1,...,X_n$ that maximizes $P(X_{\tau}=M_n)$? This problem was examined by Gilbert and Mosteller \cite{GilMost} when in addition the common distribution is continuous. The optimal win probability in this case is denoted by $v_{n,max}^*$. What if it is desired to ``stop within a proportion of the maximum''? That is, for $0<\alpha<1$, what is the stopping rule $\tau$ that maximizes $P(X_{\tau} \geq \alpha M_n)$? 
In this note both problems are treated as games, it is proven that for any continuous random variable $X$, if $\tau^*$ is the optimal stopping rule then $P(X_{\tau^*} \geq \alpha M_n)\geq v_{n,max}^*$, and this lower bound is sharp. Some examples and another interesting result are presented.

\bigskip
{\it AMS 2010 subject classification}: 60G40 (primary)
\bigskip
{\it Key words and phrases}: Choosing the maximum, Stopping time, Sharp inequalities for stochastic processes, Optimal stopping
\end{abstract}

\section{Introduction}

In the classical secretary problem (CSP) or the largest of an unknown set of numbers (Googol), a manager must select the best applicant among a known number $n$ of applicants. There is only one position available and he will interview one by one in random order. Immediately after the interview he must decide whether or not the applicant is hired based only on the relative ranks of the applicants interviewed so far. An applicant once rejected cannot be recalled. It is well known that using the optimal strategy, the probability of success decreases monotonically to $1/e\approx .3679$ as $n\to\infty$. For an entertaining exposition of the CSP see Ferguson \cite{Ferguson}.\\

An immediate extension of the CSP is the full-information best-choice problem in which each applicant independent of the others, can receive a numerical score whose distribution is known in advance and the objective is to select the highest score. The mathematical framework for this problem is as follows. Let $X_1,\dots,X_n$ be independent random variables with known continuous distributions, let $M_n:=\max\{X_1,\dots,X_n\}$, and consider $\sup_\tau \sP(X_\tau=M_n)$ where the supremum is over all stopping times adapted to the natural filtration of $X_1,\dots,X_n$. Gilbert and Mosteller \cite{GilMost} solved this optimal stopping problem when the sequence is independent and identically distributed. The optimal stopping rule depends on a sequence of decision numbers $d_i$, $1\leq i \leq n$, so that a gambler must stop at the first observation $i$ such that $X_i=M_i$ and $F(X_i)\geq d_i$, or at time $n$ if no such $i$ exists. They computed the optimal win probability for some values of $n$, denoted by $v_{n,max}^*$, are shown in Table $1$. This optimal win probability is independent of the distribution of the $X_i$'s. What if it is desired to ``stop within a proportion of the maximum''? That is, for $0<\alpha<1$, what is the stopping rule $\tau$ that maximizes $P(X_{\tau} \geq \alpha M_n)$? The examples in Section $2$ will show that the optimal stopping rule is cumbersome to calculate and that the win probability depends on the distribution. Moreover there is no close form expression to compute the optimal win probability as examined in \cite{GilMost}. In this note these two problem are treated as games for comparison. Notation and definitions are now introduced.\\

\noindent \textbf{Game Max.} Win is to stop at an observation that is the maximum of the sequence; 
\[ X_{\tau} \geq \max\{X_1,\dots,X_n\}.\]
\textbf{Game Proportion of the Max.} Win is to stop at an observation that is at least a proportion $0<\alpha<1$ of the maximum of the sequence; 
\[ X_{\tau} \geq \alpha \max\{X_1,\dots,X_n\}.\]
Here $\tau$ is any stopping rule adapted to $X_1,\dots,X_n.$\\

For the i.i.d. sequence $(X_1,\dots,X_n)$ of random variables having the same distribution as $X$, let
\begin{itemize}
\item $M_i:=\max\{X_1,\dots,X_i\}$, $i=1,2,\dots,n$,
\item $V_{n,max}(X,\tau)$ be the probability of win playing the Game Max for $X$ using the stopping rule $\tau$,
\item $V_{n,\alpha max}(X,\sigma)$ be the probability of win playing the Game Proportion of the Max for $X$ using the stopping rule $\sigma$,
\item $\tau^*$ be the optimal stopping rule for Game Max,
\item $\sigma^*$ be the optimal stopping rule for Game Proportion of the Max,
\item $V_{n,max}^*(X):= \displaystyle \sup_{\tau} V_{n,max}(X,\tau)=V_{n,max}(X,\tau^*)$,
\item $V_{n,\alpha max}^*(X):= \displaystyle \sup_{\sigma} V_{n,\alpha max}(X,\sigma)=V_{n,\alpha max}(X,\sigma^*)$,
\end{itemize}
A candidate is defined to be an observation that has a positive probability to win the game in question. In Game Max $X_i$ is a candidate if $X_i\geq \alpha M_i$. Thus, suppose a gambler is to observe $X_1,\cdots, X_n$, he can win at observation $i$ if $X_i$ is a candidate and the remaining $n-i$ are no greater than $X_i$. Similarly in Game Proportion of the Max, $X_i$ is a candidate if $X_i\geq \alpha M_i$. Note that if a gambler is playing both games when he wins in Game Max, he wins in Game Proportion of the Maximum as well. The converse is not necessarily true, this makes Game Proportion of the Max "easier" to win and hence the optimal win probability in Game Proportion of the Max is at least the optimal win probability in Game Max. The main result of this note makes this precise.  
\begin{theorem}\label{eq:thmChp3}
For each $n\geq 1$, for any continuous random variable $X$, \[V_{n,\alpha max}^*(X) \geq v_{n,max}^*\] and the inequality is sharp.
\end{theorem}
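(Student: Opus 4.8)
The plan is to separate the two assertions, since the inequality is essentially the formal version of the remark made just before the statement, while sharpness carries the real content. For the inequality, I would note that whenever a stopping rule wins Game Max it also wins Game Proportion of the Max: $X_\tau=M_n$ together with $0<\alpha<1$ (the $X_i$ being nonnegative, so $\alpha M_n\le M_n$) gives $X_\tau=M_n\ge\alpha M_n$. Hence for \emph{every} stopping rule $\tau$ one has the pointwise inclusion $\{X_\tau=M_n\}\subseteq\{X_\tau\ge\alpha M_n\}$, so $V_{n,max}(X,\tau)\le V_{n,\alpha max}(X,\tau)$. Applying this to $\tau=\tau^*$ and invoking the Gilbert--Mosteller fact recalled in the introduction that $V_{n,max}^*(X)=V_{n,max}(X,\tau^*)=v_{n,max}^*$ for any continuous $X$, I get $v_{n,max}^*=V_{n,max}(X,\tau^*)\le V_{n,\alpha max}(X,\tau^*)\le V_{n,\alpha max}^*(X)$.

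For sharpness I would exhibit a one-parameter family of continuous distributions along which $V_{n,\alpha max}^*$ decreases to $v_{n,max}^*$; together with the inequality this shows $v_{n,max}^*=\inf_X V_{n,\alpha max}^*(X)$, so the constant cannot be increased. The natural choice is heavy-tailed, e.g. $X^{(\beta)}$ with $\sP(X^{(\beta)}>x)=x^{-\beta}$ for $x\ge 1$, letting $\beta\downarrow 0$. The heuristic is that for such laws the sample maximum asymptotically dwarfs every other observation, so ``being within a fixed proportion $\alpha<1$ of $M_n$'' degenerates into ``being equal to $M_n$'', turning Game Proportion of the Max into Game Max.

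To make this precise I would use the exact decomposition, valid for any stopping rule $\sigma$ on the almost-sure no-tie event,
\[
V_{n,\alpha max}(X,\sigma)=\sP(X_\sigma=M_n)+\sP(X_\sigma\ge\alpha M_n,\ X_\sigma<M_n)\le v_{n,max}^*+\sP\big(X_{(n-1)}\ge\alpha X_{(n)}\big),
\]
where $X_{(n-1)}\le X_{(n)}$ are the two largest order statistics: I used $\sP(X_\sigma=M_n)\le\sup_\tau\sP(X_\tau=M_n)=v_{n,max}^*$, and the fact that winning without stopping at the exact maximum forces the chosen observation, hence $X_{(n-1)}$, to be $\ge\alpha X_{(n)}$. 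Taking the supremum over $\sigma$ yields $V_{n,\alpha max}^*(X)\le v_{n,max}^*+\sP(X_{(n-1)}\ge\alpha X_{(n)})$, so it remains to show $\sP\big(X^{(\beta)}_{(n-1)}\ge\alpha X^{(\beta)}_{(n)}\big)\to 0$ as $\beta\downarrow 0$. Writing $X^{(\beta)}_i=(1-U_i)^{-1/\beta}$ with $U_i$ i.i.d.\ uniform on $(0,1)$ and passing to the order statistics of the $U_i$, this probability equals $\sP\big(U_{(2)}\le\alpha^{-\beta}U_{(1)}\big)$, which tends to $\sP(U_{(2)}\le U_{(1)})=0$ since $\alpha^{-\beta}\to 1$ and $U_{(1)}<U_{(2)}$ almost surely.

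The main obstacle, and the only part needing care, is this sharpness argument: one must justify the order-statistic reduction, absorb the probability-zero tie events into the continuity of $X$, and check the limit $\sP(U_{(2)}\le\alpha^{-\beta}U_{(1)})\to 0$ (by dominated convergence, or by computing the law of $U_{(2)}/U_{(1)}$ explicitly). I would close with the remark that for $n\ge 2$ the value $v_{n,max}^*$ is approached but not attained, because $\sP(X_{(n-1)}\ge\alpha X_{(n)})>0$ for every continuous i.i.d.\ sample, whereas for $n=1$ one trivially has $V_{1,\alpha max}^*(X)=1=v_{1,max}^*$.
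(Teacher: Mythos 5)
Your proof is correct, and while the inequality half coincides with the paper's (win in Game Max implies win in Game Proportion of the Max, apply this to $\tau^*$ and use the distribution-free Gilbert--Mosteller value), your sharpness argument takes a genuinely different route. The paper builds an absolutely continuous distribution $X^{k,\epsilon}$ concentrated on $k$ spikes around the geometrically spaced points $(N_\alpha+1)^j$, and its Lemma 3.1 bounds the surplus $V_{n,\alpha max}(X^{k,\epsilon},\tau)-V_{n,max}(X^{k,\epsilon},\tau)$, uniformly over stopping rules, by the probability that the spike-index sequence $(Z_1,\dots,Z_n)$ has no unique maximum, which tends to $0$ as $k\to\infty$ via a Riemann-sum computation. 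You instead bound the surplus, again uniformly over stopping rules, by $P\bigl(X_{(n-1)}\geq\alpha X_{(n)}\bigr)$ --- since winning Proportion of the Max without stopping at the maximum forces the second-largest order statistic to exceed $\alpha M_n$ --- and then drive this to $0$ along the Pareto family $P(X^{(\beta)}>x)=x^{-\beta}$, where it equals exactly $1-\alpha^{\beta}\to 0$ as $\beta\downarrow 0$. Both proofs rest on the same structural idea (control the gap between the two games by a rule-independent quantity that can be made arbitrarily small), but your order-statistics bound is simpler, gives the clean general estimate $V_{n,\alpha max}^*(X)\leq v_{n,max}^*+P(X_{(n-1)}\geq\alpha X_{(n)})$ for every continuous $X$ with an explicit rate in the Pareto case, whereas the paper's spread-out construction requires the separate lemma and limit argument; the paper's family, on the other hand, makes the ``ties in blocks'' intuition explicit and is bounded-support friendly. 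Two minor points: with the representation $X_i^{(\beta)}=(1-U_i)^{-1/\beta}$ the relevant order statistics are those of $1-U_i$ (or just take $X_i=U_i^{-1/\beta}$), so fix that notational slip; and your closing remark that the bound is never attained for $n\geq 2$ does not follow from $P(X_{(n-1)}\geq\alpha X_{(n)})>0$, which sits on the upper-bound side --- strictness would need a lower-bound argument, e.g.\ exhibiting a positive-probability event on which $\tau^*$ wins Proportion of the Max but not Max --- though this aside is not needed for the theorem.
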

Where the values for $v_{n,max}^*$ were found by  \cite{GilMost} and are given in Table $1$.\\

\begin{remark}
{\rm
Although the Theorem is stated when the common distribution of $X$ is continuous, allowing the distribution to be discrete should make the Game Proportion of the Max easier to win due to the ties. Hence the sharp lower bound should hold as well. It is not clear how to make a proof of this. 
}
\end{remark}

The idea of the proof is to examine the worst case for a gambler. This is, playing Game Proportion of the Max with a distribution space out enough so that the advantage over Game Max is lost. The adequate distribution is given in at the beginning of Section $3$. Then in Lemma $3.1$ it is proven that the win probability of Game Proportion of the Max is arbitrarily close to Game Max. Finally, using the respective optimal stopping rules from both games and the use of inequalities, Theorem $1.1$ yields. 

\begin{table}[h]
\centering
\caption{Optimal win probabilities for some values of $n$ observations in Game Max \cite{GilMost}}
\begin{tabular}{l|l|l|l} \hline
n & $v_{n,max}^*$  & n        & $v_{n,max}^*$  \\ \hline \hline
1 & 1.0000  & 10       & .608699 \\
2 & .750000 & 15       & .598980 \\
3 & .684293 & 20       & .594200 \\
4 & .655396 & 30       & .589472 \\
5 & .639194 & 40       & .587126 \\
  &         & 50       & .585725 \\
  &         & $\infty$   & .580164\\ \hline
\end{tabular}
\end{table}

\section{Examples}
The following distributions are examined: discrete uniform and uniform. The following win probabilities are needed for computations, for $k=1,\dots,n$ let
\begin{align*}
U_k(x_1,\dots,x_k):&=P\left(x_k \geq \alpha M_n|X_1=x_1,\dots,X_k=x_k\right)\\
&=\begin{cases}
P\left(\max\{X_{k+1},\dots,X_n\} \leq \frac{ x_k}{ \alpha}\right) & \mbox{if } x_k\geq \alpha \max\{x_1,\dots,x_{k-1}\}\\
0, & \mbox{otherwise},
\end{cases}
\end{align*}
where $\max \emptyset:= -\infty$. Denote the expression in the first case above simply by $U_k(x_k)$, and note that
\[ U_k(x_k)=\left(F\left(\frac{x_k}{\alpha}\right)\right)^{n-k}. \]
Let
\[W_k(x_1,\dots,x_k):= \sup_{k < \tau \leq n} P(X_{\tau}\geq \alpha M_n|X_1=x_1,\dots,X_k=x_k).\]
$U_k(x_1,\dots,x_k)$ is interpreted as the win probability if stopping at time $k$ after observing $X_1=x_1,\dots,X_k=x_k$, and $W_k(x_1,\dots,x_k)$ as the optimal win probability when taking at least one more observation after observing  $X_1=x_1,\dots,X_k=x_k$.\\

\begin{example}Discrete Uniform Distribution. Let
\begin{center}
$P(X=x)=p(x)=1/10 \mbox{ for } x=1,2,\dots,10.$
\end{center}
Case $n=2$. The random variables $X_1, X_2$ will be observed and the optimal stopping rule is found using backward induction.\\
At observation $2$ we must stop. At observation $1$ we stop if  
\begin{align*}
U_1(x_1)=P\left(x_1 \geq \alpha X_2 \right)&\geq W_1(x_1)=P\left(X_2 \geq \alpha x_1 \right) &\Leftrightarrow \\  
\left\lfloor{ \frac{x_1}{\alpha} }\right\rfloor +  \left\lceil{ \alpha x_1 }\right\rceil  & \geq  11   
\end{align*}
Thus $x_1^*(\alpha)$ must be found so that for $x_1\geq x_1^*(\alpha)$ the previous inequality holds.

If $\alpha\geq 1/2$ and we observe a value less than $5$, continuing give us a higher probability of win. The probability of win is\\
\begin{align*}
E\left(\max\{W_1(X),U_1(X)\}\right)&=\sum_{x=1}^{10} \max \left\{W_1(x),U_1(x)\right\}p(x) \\
&= \frac{1}{10}\left(\sum_{x=x_1^*(\alpha)}^{10} \min\left\lbrace 1, \frac{  \left\lfloor{ \frac{x_1}{\alpha} }\right\rfloor}{10} \right\rbrace  + \sum_{x=1}^{x_1^*(\alpha)-1}  \left( \frac{ 11 - ( \left\lceil{ \alpha x_1 }\right\rceil )}{10} \right) \right) \\
\end{align*}
For the given values of $\alpha$, Table \ref{tab:discrete-uniform-win-prob} below shows the corresponding $x_1^*(\alpha)$ and the respective win probability. The table suggests that for small values of $\alpha$ the Game Proportion of the Max can always be won. The following proposition generalizes this idea.

\begin{table}[h]
\centering
\caption{Optimal win probabilities for discrete uniform distribution on $\{1,2,\dots,10\}$}
\label{my-label}
\begin{tabular}{l||l|l|l|l|l|l|l|l|l}
\hline
$\alpha$                       & 0.1 & 0.2 & 0.3 & 0.4 & 0.5 & 0.6 & 0.7 & 0.8 & 0.9 \\ \hline
$x_1^*(\alpha)$ & 1   & 2   & 3   & 4   & 5   & 5   & 5   & 5   & 6   \\ \hline
$P(win)$                       &  1   &  1   & 1    &  0.99   & 0.98    & 0.94    & 0.9    & 0.86    &  0.81 \\ \hline
\end{tabular}
\label{tab:discrete-uniform-win-prob}
\end{table}
\end{example}

\begin{proposition}
For any $n\geq 2$, let $X$ be a random variable. Then $V_{n,\alpha max}^*(X)=1$ if and only if $X$ has support on $[m,M]$ where $0<m\leq M<\infty$ and either (i) $\alpha^2 \leq \frac{m}{M}$ or (ii) $P(\frac{m}{\alpha}<X<\alpha M)=0$. 
\end{proposition}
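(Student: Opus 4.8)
The plan is to prove both implications directly. Write $m$ and $M$ for the essential infimum and supremum of $X$. I would first record the elementary reformulation that, when $0<m\le M<\infty$, the condition ``(i) or (ii)'' is equivalent to $\operatorname{supp}(X)\subseteq[m,m/\alpha]\cup[\alpha M,M]$: the part of $[m,M]$ missing from that union is the open interval $(m/\alpha,\alpha M)$, which is empty precisely when $\alpha^{2}\le m/M$ (that is (i)), and otherwise demanding that $X$ put no mass there is exactly (ii). I will also use that, the horizon being finite, the optimal rule $\sigma^{*}$ exists by backward induction; notice that, unlike Theorem~\ref{eq:thmChp3}, this proposition will not need continuity of $X$.

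For $(\Leftarrow)$, assume $0<m\le M<\infty$ and $\operatorname{supp}(X)\subseteq[m,m/\alpha]\cup[\alpha M,M]$, and exhibit the rule $\sigma$: stop at the first $i$ with $X_{i}\ge\alpha M$, and stop at time $n$ if there is none. If $M_{n}\ge\alpha M$ then some observation is $\ge\alpha M$, so $\sigma$ stops on a value $X_{\sigma}\ge\alpha M\ge\alpha M_{n}$, a win. If $M_{n}<\alpha M$ then no observation reaches $\alpha M$, so every $X_{i}$ lies in $[m,m/\alpha]$; hence $M_{n}\le m/\alpha$ and $\alpha M_{n}\le m\le X_{n}=X_{\sigma}$, again a win. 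Thus $V_{n,\alpha max}^{*}(X)=1$. (This direction uses neither continuity nor $n\ge 2$.)

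For $(\Rightarrow)$, suppose $V_{n,\alpha max}^{*}(X)=1$, so the optimal rule satisfies $P(X_{\sigma^{*}}\ge\alpha M_{n})=1$. I would first dispose of $M=\infty$: decomposing $1=\sum_{k=1}^{n}P(\sigma^{*}=k,\,X_{k}\ge\alpha M_{n})$ and using that $\{\sigma^{*}=k\}$ is measurable in $X_{1},\dots,X_{k}$, each term with $k<n$ equals $E\big[\mathbf 1_{\{\sigma^{*}=k\}}\mathbf 1_{\{X_{k}\ge\alpha M_{k-1}\}}F(X_{k}/\alpha)^{n-k}\big]$, which is at most $E\big[\mathbf 1_{\{\sigma^{*}=k\}}F(X_{k}/\alpha)^{n-k}\big]$, strictly below $P(\sigma^{*}=k)$ whenever that is positive because $F(X_{k}/\alpha)<1$ almost surely when $M=\infty$; this forces $\sigma^{*}\equiv n$, and then $P(X_{n}\ge\alpha M_{n-1})=1$ would require $m\ge\alpha M$, impossible (here $n\ge 2$ enters). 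Next, $m\le 0$: if $m<0$ then with positive probability all $X_{i}$ fall in $[m,m')$ for some $m'\in(m,0)$, so $M_{n}<0$, whence $\alpha M_{n}>M_{n}\ge X_{\sigma^{*}}$, a certain loss; the boundary case $m=0$ is routine once the degenerate point mass at $0$ is set aside (with $P(X>0)=1$, a support point then survives in $(0,\alpha M)$ and the argument below applies). So from here $0<m\le M<\infty$.

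It then remains to show $\operatorname{supp}(X)\cap(m/\alpha,\alpha M)=\varnothing$. Assume, for contradiction, a support point $c$ with $m/\alpha<c<\alpha M$, and pick $\varepsilon>0$ small enough that $c-\varepsilon>m+\varepsilon$, $c+\varepsilon<\alpha(M-\varepsilon)$, and $m+\varepsilon<\alpha(c-\varepsilon)$ all hold (possible since $m/\alpha<c<\alpha M$). Because $P(X_{1}\in(c-\varepsilon,c+\varepsilon))>0$ and $\{\sigma^{*}=1\}$ is determined by $X_{1}$, at least one of $E_{\mathrm{stop}}=\{X_{1}\in(c-\varepsilon,c+\varepsilon),\ \sigma^{*}=1\}$ and $E_{\mathrm{cont}}=\{X_{1}\in(c-\varepsilon,c+\varepsilon),\ \sigma^{*}\ge 2\}$ has positive probability. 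On $E_{\mathrm{stop}}$ intersected with the independent, positive-probability event that some $X_{j}>M-\varepsilon$ with $j\ge 2$, one has $\alpha M_{n}>\alpha(M-\varepsilon)>c+\varepsilon>X_{1}=X_{\sigma^{*}}$, a loss; on $E_{\mathrm{cont}}$ intersected with the independent, positive-probability event $\{X_{2},\dots,X_{n}\in[m,m+\varepsilon)\}$, one has $M_{n}=X_{1}$ and $X_{\sigma^{*}}<m+\varepsilon<\alpha(c-\varepsilon)<\alpha M_{n}$, again a loss. Either way $P(X_{\sigma^{*}}<\alpha M_{n})>0$, contradicting $V_{n,\alpha max}^{*}(X)=1$; hence $P(m/\alpha<X<\alpha M)=0$, which together with $0<m\le M<\infty$ is precisely ``(i) or (ii)''. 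I expect this final dichotomy to be the only real obstacle: a support point in the gap $(m/\alpha,\alpha M)$ is at once too small to be a safe stopping value — a later observation may climb close to $M$ — and too large to be safely skipped — later observations may all drop back toward $m$, leaving that value as the overall maximum — so no stopping rule can escape a positive-probability loss.
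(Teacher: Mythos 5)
Your two directions follow essentially the same route as the paper's: for sufficiency you use the rule ``stop at the first observation $\ge \alpha M$, otherwise stop at $n$'' (the paper's $\tau$), with the clean reformulation that (i) or (ii) is equivalent to the support lying in $[m,\frac{m}{\alpha}]\cup[\alpha M,M]$; for necessity you run the same stop/continue double-loss argument at a support point $c\in(\frac{m}{\alpha},\alpha M)$ that the paper runs on the interval $[\frac{m}{\alpha}+\epsilon,\alpha M-\epsilon]$, only phrased more carefully through the events $\{\sigma^*=1\}$, $\{\sigma^*\ge 2\}$ and explicit independence of the tail events. You also go beyond the paper in actually proving what it dismisses as clear, namely $M<\infty$ (via the decomposition over $\{\sigma^*=k\}$ and $F(X_k/\alpha)<1$ a.s.) and the impossibility of $m<0$; those arguments are correct.

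The one genuine gap is your treatment of the boundary $m=0$: the dichotomy ``degenerate point mass at $0$'' versus ``$P(X>0)=1$'' is not exhaustive, since it omits $0<P(X=0)<1$, and in that missing case the conclusion you want can actually fail. For instance, if $P(X=0)=p\in(0,1)$ and $P(X=1)=1-p$, the rule ``stop at the first $1$, else stop at $n$'' wins with probability one ($X_\sigma=1\ge\alpha M_n$ if some $1$ appears, and $X_\sigma=0=\alpha M_n$ otherwise), yet $m=0$, so no argument can rule this case out: the ``only if'' direction as literally stated is violated there. To be fair, the paper's own proof glosses exactly the same point (``it is clear that the distribution of $X$ must have support on $[m,M]$ for some $0<m\le M<\infty$''), and the proposition is only literally correct under an implicit positivity assumption such as $P(X>0)=1$ --- under which your argument is complete, since with $m=0$ and $P(X>0)=1$ support points of $X$ accumulate in $(0,\alpha M)$ and your gap argument applies. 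So you should either impose $P(X>0)=1$ (or directly $m>0$) as a standing assumption, or note explicitly that distributions with an atom at $0$ must be excluded from the necessity direction.
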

\begin{proof}
First, we show that if (i) or (ii) then  $V_{n,\alpha max}^*(X)=1$. Consider $\tau:= \min\{1 \leq i \leq n-1:  X_i \geq \alpha M \}$ or $\tau:=n$ if no such $i$ exists. 
Under either of the two conditions, it is trivial that for each $1\leq i \leq  n-1$, \[P(X_i \geq \alpha \max\{X_{i+1},\dots,X_n\}| \max\{X_1,\dots,X_{i-1}\}< \alpha M, X_i \geq \alpha M)=1.\]
And at observation $n$, if (i), we have 
\[P(X_n \geq \alpha \max\{X_1,\dots,X_{n-1}\} | \max\{X_1,\dots,X_{n-1}\} < \alpha M)\geq P(X_n \geq m)=1.\]
If (ii), we have   
 \[P(X_n \geq \alpha \max\{X_1,\dots,X_{n-1}\} | \max\{X_1,\dots,X_{n-1}\} < \alpha M)\geq P(X_n \geq m)=1,\]
since $\max\{X_1,\dots,X_{n-1}\} < \alpha M$ and $X$ does not have mass between $\frac{m}{\alpha}$ and $\alpha M$ it follows that $\max\{X_1,\dots,X_{n-1}\} \leq \frac{m}{\alpha}$, thus $\alpha \max\{X_1,\dots,X_{n-1}\} \leq m$. Hence for any of the two conditions $V_{n,\alpha max}^*(X)=1$. Conversely,  it is clear that the distribution of $X$ must have support on $[m,M]$ for some $0<m\leq M<\infty$ and either $\alpha^2 \leq \frac{m}{M}$ or $\alpha^2 > \frac{m}{M}$ is satisfied. Suppose $\alpha^2 > \frac{m}{M}$ and assume that for  some $\epsilon>0$, $X$ has mass on the interval $I:=[\frac{m}{\alpha}+\epsilon,\alpha M - \epsilon]$. If we observe $X_1 \in I$ and stop, we lose if there exists $l>1$ such that $M-\frac{\epsilon}{\alpha} < X_l$. If we continue, we lose if for all $l>1$, $X_l< m + \alpha \epsilon$. It follows that no stopping rule gives a win probability of $1$ unless the interval $I$ has probability $0$. Hence (ii) must be satisfied. 
\end{proof}

\begin{remark}
{\rm
This result holds for any random variable while the Theorem is proven for a continuous random variable.
}
\end{remark}

\begin{example}Continuous Uniform Distribution on $(0,1)$\\
Case $n=2$. The random variables $X_1, X_2$ will be observed and the optimal stopping rule is found using backward induction.\\
At observation $2$ we must stop. At observation $1$ stop if 
\begin{align}
U_1(x_1)=P\left(x_1 \geq \alpha X_2 \right)&\geq W_1(x_1)=P\left(X_2 \geq \alpha x_1 \right) &\Leftrightarrow  \nonumber \\  
\min\{1,\frac{x_1}{\alpha}\} &\geq 1 - \alpha x_1   &\Leftrightarrow \nonumber \\ 
x_1 &\geq \frac{\alpha}{1+\alpha^2}:=x_1^*(\alpha). \nonumber
\end{align}
The probability of win is
\begin{align*}
E\left(\max\{W_1(X),U_1(X)\}\right)&= \int_{x_1^*(\alpha)}^1 U_1(x)dx + \int_0^{x_1^*(\alpha)} W_1(x)dx \\
&= \int_{x_1^*(\alpha)}^\alpha \frac{x}{\alpha}dx + \int_{\alpha}^1 1 dx + \int_0^{x_1^*(\alpha)} \left(1-\alpha x \right)dx \\
&=1-\frac{\alpha^3}{2(\alpha^2+1)}.
\end{align*}
This is graphed in Figure \ref{fig:uniform-n2-win-prob}.

\begin{figure}[h]
\centering
\caption{Optimal win probabilities for the uniform$(0,1)$ distribution, $n=2$} \label{fig:wp_1}
\begin{tikzpicture}
  \pgfdeclarelayer{pre main}
  \pgfsetlayers{pre main,main}
  \begin{axis}[
      axis lines = middle,
      enlargelimits,
      domain  = 0:1,
			xtick={0.5,1},
			ytick={0.75,0.9,1},
      xlabel  = {$\alpha$},
      ylabel  = {$P(win)$},
			xlabel style={below right},
      ylabel style={above left},
      xmin    = 0,
      xmax    = 1,
      ymin    = 0.5,
      ymax    = 1,
          ]

 		\addplot [name path=l3 ]  {1-x/2+ (x/(1+x*x))*( -1/(2*(1+x*x))+1-(x*x/(2*(1+x*x))) ) };

		\end{axis}	
\end{tikzpicture}
\label{fig:uniform-n2-win-prob}
\end{figure}
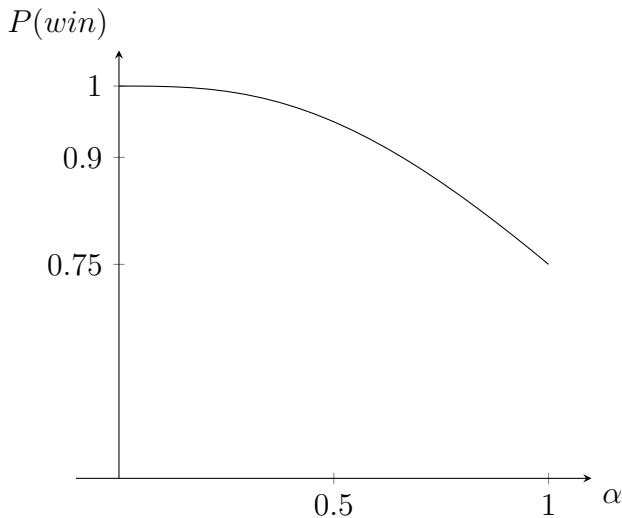

\end{example}

From the previous examples it can been seen that the optimal stopping rule is cumbersome to calculate since it depends on the common distribution. The probability of winning is in general high but, how low can the optimal win probability be in Game Proportion of the Max? The main result of this note provides the best lower bound.

\begin{section}{ A sharp lower bound.}
To prove Theorem $1.1$, a family of random variables is introduced with the property that the values are so spread out so that advantage in Game Proportion of the Max over Game Max is lost. In other words, the only possibility to win in Game Proportion of the Max is to choose the maximum of the sequence. 

Let $N_{\alpha}=\min\{N \in \mathbb{N}| \alpha > \frac{1}{N}\}$ and consider the random variable $X^{k,\epsilon}$ with density 
\[
f_{X^{k,\epsilon}}(x) =
  \begin{cases}
    \frac{1}{2\epsilon k}  & \text{if } (N_{\alpha}+1)^j-\epsilon \leq x \leq (N_{\alpha}+1)^j+\epsilon \text{ for } j=1,\dots,k,\\
    0  & \text{otherwise}, \\	
  \end{cases}
\]
$k \in \mathbb{N}$. The value of $\epsilon$ must be chosen such that for $j=1,\dots,k-1$, the inequality \[
(N_{\alpha}+1)^j+\epsilon < \alpha (N_{\alpha}+1)^{j+1} - \epsilon \]
holds. Solving, it is enough for $\epsilon$ to satisfy
\[
\frac{(N_{\alpha}+1)^j(\alpha N_{\alpha}+\alpha -1)}{\alpha+1}>\epsilon
\]
when $j=1$. The intervals where this density is positive are enough space out so that the gambler's advantage playing the Game Proportion of the Max over the Game Max is lost. The following lemma makes this precise.
\begin{lemma} \label{eq:lemma4}
For every $n\geq 1$, given $\delta>0$, there exists $k>1$ such that\[  
V_{n,\alpha max}(X^{k,\epsilon},\tau) \leq V_{n,max}(X^{k,\epsilon},\tau) + \delta\]
for any stopping rule $\tau$ adapted to $X_1,\cdots, X_n$.
\end{lemma}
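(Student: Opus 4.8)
The plan is the following. Fix $n\ge 1$ and $\delta>0$. I would first fix $\epsilon>0$ small enough that the spacing condition displayed just above holds; since the gap $\alpha(N_{\alpha}+1)^{j+1}-(N_{\alpha}+1)^{j}=(N_{\alpha}+1)^{j}\bigl(\alpha(N_{\alpha}+1)-1\bigr)$ is positive (because $\alpha(N_{\alpha}+1)>1$, as $\alpha>1/N_{\alpha}$) and increasing in $j$, the constraint at $j=1$ is the binding one, so a single $\epsilon$ works for every $k$; the value of $k$ I would choose only at the very end, depending on $n$ and $\delta$. With $\epsilon$ fixed, write $I_j:=[(N_{\alpha}+1)^j-\epsilon,(N_{\alpha}+1)^j+\epsilon]$ for the $j$-th block of the support, and for a realization of $X_1,\dots,X_n$ let $L_i\in\{1,\dots,k\}$ denote the (a.s.\ unique) index with $X_i\in I_{L_i}$.

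The main step — and the one I expect to be the real obstacle — is to show that, because the blocks are so far apart, a win in Game Proportion of the Max that is not already a win in Game Max is impossible unless two observations fall in the same block. Concretely, I would establish the event inclusion
\[
\{X_\tau\ge \alpha M_n\}\ \subseteq\ \{X_\tau = M_n\}\ \cup\ \{\exists\, i\ne l:\ L_i=L_l\}.
\]
Off the right-hand collision event all the $L_i$ are distinct, so all $X_i$ are distinct and $M_n=X_m$ for a unique index $m$; if $\tau\ne m$ then $X_\tau<M_n$, and since the blocks are disjoint and ordered this forces $L_\tau<L_m$, in particular $L_\tau\le k-1$, so the spacing inequality applies at $j=L_\tau$ and gives
\[
X_\tau\le (N_{\alpha}+1)^{L_\tau}+\epsilon<\alpha(N_{\alpha}+1)^{L_\tau+1}-\epsilon\le \alpha\bigl((N_{\alpha}+1)^{L_m}-\epsilon\bigr)\le \alpha M_n,
\]
contradicting $X_\tau\ge\alpha M_n$; hence $\tau=m$, i.e.\ $X_\tau=M_n$. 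This is precisely where the geometric growth $(N_{\alpha}+1)^j$ is exploited: one never gets to within a factor $\alpha$ of a strictly larger observation. Note the inclusion holds for every stopping rule $\tau$ and does not involve $k$.

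It then remains to bound the collision probability, which is a birthday-type estimate. Since $X^{k,\epsilon}$ assigns mass $1/k$ to each of the $k$ blocks and the $X_i$ are i.i.d., the indices $L_1,\dots,L_n$ are i.i.d.\ uniform on $\{1,\dots,k\}$, so
\[
\sP\bigl(\exists\, i\ne l:\ L_i=L_l\bigr)=1-\frac{k(k-1)\cdots(k-n+1)}{k^n},
\]
which tends to $0$ as $k\to\infty$ for fixed $n$. I would then pick $k=k(n,\delta)>1$ making this probability strictly less than $\delta$; crucially this bound does not depend on $\tau$. Taking probabilities in the inclusion of the previous paragraph, for this $k$ and every stopping rule $\tau$,
\[
V_{n,\alpha max}(X^{k,\epsilon},\tau)=\sP(X_\tau\ge\alpha M_n)\le \sP(X_\tau=M_n)+\delta=V_{n,max}(X^{k,\epsilon},\tau)+\delta,
\]
which is the assertion of the lemma. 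The two points that need care are the case analysis in the event inclusion (confirming that a $\tau$-win with $X_\tau\ne M_n$ genuinely pins $X_\tau$ and $M_n$ into the same block, using only the $j=L_\tau$ instance of the spacing hypothesis), and the observation that a single $k$ serves simultaneously for all stopping rules, since the collision probability is $\tau$-free.
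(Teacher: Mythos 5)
Your proposal is correct and follows essentially the same route as the paper: both reduce a win in Game Proportion of the Max that is not a win in Game Max to the event that two observations share a block of the support of $X^{k,\epsilon}$, and then choose $k$ large (uniformly in $\tau$) to make that event have probability less than $\delta$. The only cosmetic differences are that you bound the probability of any block collision by the exact birthday product, whereas the paper bounds the slightly smaller event that the block indices $Z_1,\dots,Z_n$ have no unique maximum via a Riemann-sum limit, and that you spell out in detail the spacing-based event inclusion which the paper asserts without proof.
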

\begin{proof}
When $n=1$, the result is trivial. Let $n>1$ and $\delta >0$. The idea of the proof is that a gambler is observing the sequence and playing both games with the same strategy. It's straightforward that if he wins in the Game Max, he also wins in the Game Proportion of the Max but the converse may not be true. \\
Thus, 
\begin{equation}\label{eq:ineqlemma}
V_{n,\alpha max}(X^{k,\epsilon},\tau) - V_{n,max}(X^{k,\epsilon},\tau)= P(B),
\end{equation}
where $B$ is the event that the gambler wins in Game Proportion of the Max and loses in Game Max using the stopping rule $\tau$. To simplify calculations, the following random variables are introduced. For $1 \leq i \leq n$, let \[
Z_i =
  \begin{cases}
    j  & \text{if } (N_{\alpha}+1)^j-\epsilon \leq X_i^{k,\epsilon}  \leq (N_{\alpha}+1)^j+\epsilon \text{ for } j=1,\dots,k,\\
  \end{cases}
\]
then the event $B$ is contained in the event where the sequence ($Z_1,\dots,Z_n$) does not have a unique maximum. It follows that 
\begin{align*}
P(B) &\leq P((Z_1,\dots,Z_n) \text{ does not have a unique maximum})\\
&=1-P((Z_1,\dots,Z_n) \text{ has a unique maximum}).
\end{align*}
For $i=1,\dots,n$, let $A_i$ be the event that the sequence ($Z_1,\dots,Z_n$) has the unique maximum at observation $i$. These events are clearly mutually exclusive. Then
\begin{align*}
P(B)&\leq 1- P\left( \bigcup_{i=1}^n A_i \right)\\
&=1- \sum_{i=1}^n P\left( A_i \right).
\end{align*}
Since the unique maximum $Z_i=j$ for some $j \in \{2,\dots,k \}$, each $A_i$ can be expressed as the union of mutually exclusive events
\[A_i= \bigcup_{j=2}^k \left[ \left\{ Z_i=j \right\}\cap \bigcap_{l\neq i} \left\{ Z_l<Z_i \right\} \right].\]
Thus 
\[ P\left( A_i \right)= \sum_{j=2}^k P\left( \left\{ Z_i=j \right\}\cap \bigcap_{l\neq i} \left\{Z_l<j \right\}\right), \]
and so
\begin{align*}
P(B)& \leq 1- \sum_{i=1}^n \sum_{j=2}^k P\left( \left\{ Z_i=j \right\}\cap \bigcap_{l\neq i} \left\{Z_l<j \right\}\right)\\
&=1- n\sum_{j=2}^k \frac{1}{k} \left(\frac{j-1}{k} \right)^{n-1}.
\end{align*}
Now, the limit of the summation as  $k \rightarrow \infty$ is found: 
\[ \lim_{k \to \infty} \sum_{j=2}^{k} \frac{1}{k}\left(\frac{j-1}{k}\right)^{n-1}=\lim_{k \to \infty} \sum_{j=0}^{k-1} \frac{1}{k}\left(\frac{j}{k}\right)^{n-1}
	=\int_0^{1}t^{n-1}dt = \frac{1}{n},\]
since the summation on the left hand side is a Riemann sum of $f(t)=t^{n-1}$ over $I=[0,1]$.\\
So, let 
\[k_{\delta}:= \min\left\{k\geq 1:\frac{1-\delta}{n} <  \sum_{j=1}^{k-1} \frac{1}{k}\left(\frac{j}{k}\right)^{n-1}\right\}. \]
Then, 
\begin{align*}
P(B) &\leq 1-n\sum_{j=1}^{k_{\delta}-1} \frac{1}{k_{\delta}} \left(\frac{j-1}{k_{\delta}} \right)^{n-1}\\
&<1-n\frac{1-\delta}{n} \\
&=\delta.
\end{align*}
Thus \eqref{eq:ineqlemma} holds for $X^{k_{\delta},\epsilon}$. This completes the proof.
\end{proof}

Now, using the previous Lemma the proof of the Theorem is simple.
\begin{proof}[Proof of Theorem \ref{eq:thmChp3}] 
Let $n\geq1$. It is straightforward that for any $\tau$
\begin{equation*}
\{X_\tau\geq \alpha M_n \} \supseteq \{X_\tau\geq M_n\}.  
\label{eq:ineq}
 \end{equation*}
(If the gambler wins in Game Max, he also wins in Game Proportion of the Max), thus 
\[V_{n,\alpha max}(X,\tau) \geq V_{n,max}(X,\tau).  \]
In particular, when using the optimal stopping rule $\tau^*$ of the Game Max in both games, it follows that 
\[V_{n,\alpha max}^*(X) \geq V_{n,\alpha max}(X,\tau^*) \geq V_{n,max}(X,\tau^*) = v_{n,max}^* \]
Now to prove that the bound is sharp, let $\delta >0$. By Lemma \ref{eq:lemma4} there exists $k_{\delta}$ such that for the $X^{k_{\delta},\epsilon}$ distribution if the gambler uses the optimal stopping rule $\sigma^*$ for the Game Proportion of the Max in both games,  
\begin{align*}
V_{n,\alpha max}^*(X^{k_{\delta},\epsilon})= V_{n,\alpha max}(X^{k_{\delta}, \epsilon},\sigma^*) &\leq V_{n,max}(X^{k_{\delta},\epsilon},\sigma^*) + \delta \\
&\leq V_{n,max}^*(X^{k_{\delta},\epsilon})+\delta\\
&= v^*_{n,max} + \delta.
\end{align*}
Thus for any $\delta>0$ there exists a distribution such that the optimal win probability in Game Proportion of the Max is at most $v^*_{n,max} + \delta$. Hence the constant $v^*_{n,max}$ is the best lower bound. 
\end{proof}

%
%

\end{section}

\section*{Acknowledgment}
To my PhD adviser Pieter Allaart for his guidance in this project.

\footnotesize

\end{document}